\documentclass{amsart}
\usepackage{graphicx}
\usepackage{amsmath}
\usepackage{amssymb}
\usepackage{oldgerm}
\usepackage[all]{xy}
\usepackage{color}

\newcommand{\Hom}{\operatorname{Hom}\nolimits}

\newcommand{\id}{\operatorname{id}\nolimits}

\newcommand{\Ext}{\operatorname{Ext}\nolimits}

\newcommand{\HH}{\operatorname{HH}\nolimits}

\newcommand{\Z}{\operatorname{Z}\nolimits}

\newcommand{\V}{\operatorname{V}\nolimits}
\newcommand{\s}{\operatorname{\Sigma}\nolimits}

\newcommand{\T}{\operatorname{\mathcal{T}}\nolimits}

\newcommand{\ev}{\operatorname{ev}\nolimits}
\newcommand{\Proj}{\operatorname{Proj}\nolimits}
\newcommand{\supp}{\operatorname{Supp}\nolimits}
\newcommand{\stmod}{\operatorname{\underline{mod}}\nolimits}
\newcommand{\sthom}{\operatorname{\underline{Hom}}\nolimits}

\newtheorem{theorem}{Theorem}[section]

\newtheorem{corollary}[theorem]{Corollary}

\newtheorem{lemma}[theorem]{Lemma}
\newtheorem{proposition}[theorem]{Proposition}

\theoremstyle{definition}
\newtheorem*{definition}{Definition}
\newtheorem{setup}[theorem]{Setup}
\theoremstyle{definition}

\theoremstyle{definition}

\theoremstyle{definition}

\theoremstyle{definition}

\newtheorem*{notation}{Notation}
\theoremstyle{definition}

\newtheorem{remark}[theorem]{Remark}
\theoremstyle{definition}

\theoremstyle{definition}

\DeclareMathOperator{\efg}{Mod_{efg}}
\newcommand{\growth}{\gamma}
\DeclareMathOperator{\CM}{CM}
\DeclareMathOperator{\stCM}{\underline{CM}}

\begin{document}
\title[Cohomological symmetry]{Cohomological symmetry in triangulated categories}
\author[Bergh]{Petter Andreas Bergh}
\author[Oppermann]{Steffen Oppermann}

\address{Petter Andreas Bergh \\ Institutt for matematiske fag \\
  NTNU \\ N-7491 Trondheim \\ Norway}
\email{bergh@math.ntnu.no}

\address{Steffen Oppermann \\ Institutt for matematiske fag \\
  NTNU \\ N-7491 Trondheim \\ Norway}
\email{Steffen.Oppermann@math.ntnu.no}


\keywords{Vanishing of cohomology, symmetry, triangulated categories, exterior algebras}

\subjclass[2010]{15A75, 16E30, 16E40, 18E30, 18G15}

\begin{abstract}
We give a criterion for cohomological symmetry in a triangulated category. As an application, we show that such cohomological symmetry holds for all pairs of modules over any exterior algebra. 
\end{abstract}

\maketitle

\section{Introduction}

Given a ring $A$ and two finitely generated modules $M$ and $N$, does the equivalence
$$\Ext_A^n(M,N)=0 \; \forall n \gg 0 \quad \Longleftrightarrow \quad \Ext_A^n(N,M)=0 \; \forall n \gg 0$$
hold? Obviously, for most rings, this kind of symmetry cannot hold: it would for example imply that $\Ext_A^n(M,A)=0$ for all $n \gg 0$ and every finitely generated module $M$. Nevertheless, there are large classes of rings having such symmetry. The first nontrivial such class of rings was given by Avramov and Buchweitz in \cite{AvramovBuchweitz}, where it was shown that symmetry holds over all commutative local complete intersection rings. In \cite{HunekeJorgensen}, Huneke and Jorgensen extended this class to include the so-called AB-rings, which are certain commutative local Gorenstein rings. For noncommutative rings, the most prominent class known to satisfy symmetry are the group algebras of finite groups, as shown by Mori in \cite{Mori}.

In this paper, we show that symmetry holds over exterior algebras. Partial results already exist in the literature. By \cite[Corollary~4.9 and Theorem~6.6]{Mori}, symmetry holds for exterior algebras on odd dimensional vector spaces, and for all graded modules over any such algebra. We prove that it holds for all modules over all exterior algebras.

Our strategy towards this goal is as follows: in Section~\ref{sect.triang} we investigate cohomological symmetry for triangulated categories with a central ring action. We show that, under certain conditions, cohomological symmetry is a local property (see Theorem~\ref{GlobalSymmetry}). In Section~\ref{sect.applic} we apply this result, first to Gorenstein algebras in general (Theorem~\ref{theorem.gorenstein}), and then to exterior algebras in particular (see Theorem~\ref{SymmetryExterior} and Corollary~\ref{ExtSymmetryExterior}).

\section{Symmetry in triangulated categories} \label{sect.triang}

Let $k$ be a field. Throughout this section, we fix a triangulated $k$-category $\T$ with suspension functor $\s$. Given objects $X$ and $Y$, we define
$$\Hom^*_{\T}(X,Y) \stackrel{\text{def}}{=} \bigoplus_{n \in \mathbb{Z}} \Hom_{\T}(X, \s^nY).$$
We start by recalling the notions of graded center, central ring actions and support of objects. For more details, see \cite{BIKO}.

The \emph{graded center} of $\T$, denoted $\Z^*( \T )$, is defined by $\Z^n( \T )$ being the collection of all natural transformations $1_{\T} \xrightarrow{f} \s^n$ satisfying $f_{\s X} = (-1)^n \s f_X$ for every object $X$. (We may think of $\Z^*( \T )$ as being a graded algebra, however note that in general there is no reason for the collections involved to be sets.) Straightforward induction then gives $f_{\s^t X} = (-1)^{nt} \s^t f_X$ for every $t \in \mathbb{Z}$. For objects $X$ and $Y$, a homogeneous central element $f \in \Z^{|f|}( \T )$ acts both from the right and from the left on the graded abelian group $\Hom^*_{\T}(X,Y)$ in a natural way. Explicitly, if $g \in \Hom_{\T}(X, \s^{|g|}Y)$ is a homogeneous element, then the scalar multiplications with $f$ are given by
\begin{eqnarray*}
fg & \stackrel{\text{def}}{=} & ( \s^{|g|} f_Y ) \circ g \\
gf & \stackrel{\text{def}}{=} & ( \s^{|f|} g ) \circ f_X. 
\end{eqnarray*} 
Both these morphisms are elements in $\Hom_{\T}(X, \s^{|f|+|g|}Y)$, and they are actually equal up to a sign. Namely, since $f$ is a natural transformation from the identity functor on $\T$ to the functor $\s^{|f|}$, the diagram
$$\xymatrix{
X \ar[r]^{g} \ar[d]^{f_X} & \s^{|g|}Y \ar[d]^{f_{\s^{|g|}Y}} \\
\s^{|f|} X \ar[r]^{\s^{|f|}g} & \s^{|f]+|g|}Y }$$
commutes. Then since $f_{\s^{|g|}Y} = (-1)^{|f||g|} \s^{|g|} f_Y$, we see that
$$fg = (-1)^{|f||g|} gf.$$
This shows two things. First, by taking $g$ to be the coordinate map $X \xrightarrow{g_X} \s^{|g|}X$ at $X$ of another homogeneous central element of $\Z^*( \T )$, we see that the graded center is graded-commutative. Second, for all objects $X$ and $Y$ the graded abelian group $\Hom^*_{\T}(X,Y)$ is a graded $\Z^*( \T )$-module, on which the left and right scalar actions coincide up to sign. Note also that the map
\begin{eqnarray*}
\Z^*( \T ) & \to & \Hom^*_{\T}(X,X) \\
f & \mapsto & f_X
\end{eqnarray*}  
is a homomorphism of graded algebras.

Let $H = \oplus_{n=0}^{\infty} H_n$ be a graded-commutative $k$-algebra (positively graded). Then $H$ \emph{acts centrally} on $\T$ if there exists a homomorphism $H \to \Z^*( \T )$ of graded algebras. Typical examples are the cohomology ring of a finite group acting on the stable module category, and the Hochschild cohomology ring of a finite dimensional algebra acting on the derived category of modules. Combining with the coordinate maps, we see that the central ring action gives, for each object $X$, a homomorphism
$$H \xrightarrow{\varphi_X} \Hom^*_{\T}(X,X)$$
of graded algebras. Thus, for every pair of objects $X$ and $Y$, the graded group $\Hom^*_{\T}(X,Y)$ becomes a graded left/right $H$ module through $\varphi_Y$ and $\varphi_X$, and these scalar actions coincide up to sign.

For a graded-commutative algebra $H$, the \emph{even part} $H^{\ev} = \oplus_{n=0}^{\infty} H_{2n}$ is commutative. Therefore any central ring action gives rise to an action of a commutative algebra concentrated in even degrees.

\begin{setup} \label{setup.triang_supp}
Throughout the rest of this section, let $R$ be a connected commutative graded $k$-algebra of finite type concentrated in even degrees acting centrally on $\T$. (Here ``connected'' means $R_0 = k$, and ``of finite type'' means that $R$ is finitely generated as an algebra over $k$.) We set $\mathbb{X} = \Proj R$, the set of homogeneous prime ideals of $R$ not containing $R_+ = \oplus_{n > 0} R_n$, with its natural scheme structure.
\end{setup}

\begin{definition}
A graded $R$-module $M$ is \emph{eventually finitely generated} if there is a number $n_0$ such that the submodule $M_{\geq n_0} = \oplus_{n \geq n_0} M_n$ is finitely generated. We denote by $\efg R$ the category of eventually finitely generated $R$-modules.
\end{definition}

\begin{notation}
Let $M$ be a graded $R$-module.
\begin{enumerate}
\item The \emph{support} of $M$ is
\[ \supp_{\mathbb{X}} M = \{ \mathfrak{p} \in \mathbb{X} \mid M_{\mathfrak{p}} \neq 0 \}, \]
where $M_{\mathfrak{p}}$ denotes the localization of $M$ with respect to the prime ideal $\mathfrak{p}$.
\item The \emph{rate of growth} of $M$ is
\[ \growth M = \inf \{ \alpha \in \mathbb{N} \cup \{ 0 \} \mid \exists C \colon \dim_k M_n \leq Cn^{\alpha-1}  \text{ for } n \gg 0 \}. \]
\end{enumerate}
\end{notation}

\begin{remark} \label{rem.cut_doesnt_matter} 
(1) Let $M$ be a graded $R$-module, $n_0 \in \mathbb{Z}$, and $\mathfrak{p} \in \mathbb{X}$. Then the short exact sequence
\[ 0 \to M_{\geq n_0} \to M \to M/M_{\geq n_0} \to 0 \]
gives rise to a short exact sequence
\[ 0 \to  (M_{\geq n_0})_{\mathfrak{p}} \to M_{\mathfrak{p}} \to \underbrace{(M/M_{\geq n_0})_{\mathfrak{p}}}_{= 0} \to 0, \]
in which the rightmost term vanishes since $\mathfrak{p}$ does not contain all elements of $R$ of positive degree. In particular, $\supp_{\mathbb{X}} M = \supp_{\mathbb{X}} M_{\geq n_0}$.

(2) If $M$ is a finitely generated $R$-module, then $\supp_{\mathbb{X}} M$ is a closed subset of $\mathbb{X}$. By (1), the same claim holds if we just require $M \in \efg R$.

(3) \label{rem.cx_dim} If $M \in \efg R$, then 
$$\growth M = \left\{ \begin{array}{ll} 0 & \text{ if } \supp_{\mathbb{X}} M = \emptyset \\ 1 + \dim \supp_{\mathbb{X}} M & \text{ if } \supp_{\mathbb{X}} M \neq \emptyset. \end{array} \right. $$
\end{remark}

We now apply the general notion of support to the setup we are interested in.

\begin{notation}
By abuse of notation, for $X, Y \in \T$ we write
\[ \supp_{\mathbb{X}} (X, Y) = \supp_{\mathbb{X}} \Hom_{\T}^*(X, Y), \]
and
\[ \supp_{\mathbb{X}} (X) = \supp_{\mathbb{X}} \Hom_{\T}^*(X, X). \]
\end{notation}

In the definition of support of a pair of objects, the order is essential: there are examples where $\supp_{\mathbb{X}} (X,Y)$ does not equal $\supp_{\mathbb{X}}(Y,X)$, even when $\Hom_{\T}^*(X, Y) \oplus \Hom_{\T}^*(Y, X) \in \efg R$ (see, for example, \cite[Example, Section 4]{Bergh3}). The main concern in this paper is whether the statement
\[ \Hom_{\T}(X, \s^nY)=0 \; \forall n \gg 0 \quad \Longleftrightarrow \quad \Hom_{\T}(Y, \s^nX)=0 \; \forall n \gg 0 \]
holds: under the finiteness condition, by Remark~\ref{rem.cut_doesnt_matter}(3), this is equivalent to asking whether
$$\supp_{\mathbb{X}}(X,Y) = \emptyset \quad \Longleftrightarrow \quad \supp_{\mathbb{X}}(Y,X) = \emptyset$$
holds.

The following lemma records some of the elementary properties of triangulated support which we will make use of in the further discussion.

\begin{lemma} \label{ElementaryProperties}
In the situation of Setup~\ref{setup.triang_supp}, let $X$ and $Y$ be two objects of $\T$. Then the following hold.
\begin{enumerate}
\item $\supp_{\mathbb{X}} (X,Y) \subseteq \supp_{\mathbb{X}} (X) \cap \supp_{\mathbb{X}} (Y)$.
\item For a homogeneous element $r \in R$, complete the map $\varphi_X(r)$ to a distinguished triangle
$$X \xrightarrow{\varphi_X(r)} \s^{|r|}X \to X /\!\!/r \to \s X$$
in $\T$.
\begin{itemize}
\item[(a)] If $\Hom^*_{\T}(X,Y) \in \efg R$, then also $\Hom^*_{\T}(X /\!\!/r, Y) \in \efg R$, and
\[ \supp_{\mathbb{X}}  (X /\!\!/r,Y) = \V_{\mathbb{X}}(r) \cap \supp_{\mathbb{X}} (X,Y). \]
\item[(b)] If $\Hom_{\T}^*(Y, X) \in \efg R$, then $\Hom^*_{\T}(Y, X /\!\!/r) \in \efg R$, and
\[ \supp_{\mathbb{X}} (Y,X /\!\!/r) = \V_{\mathbb{X}}(r) \cap \supp_{\mathbb{X}} (Y,X). \]
\end{itemize}
\end{enumerate}
\end{lemma}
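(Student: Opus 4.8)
The plan is to treat (1) directly and parts (2)(a),(2)(b) via a long exact sequence combined with Nakayama's lemma, exploiting that $R$ is Noetherian (being of finite type over $k$).

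For (1): the graded group $\Hom^*_{\T}(X,Y)$ is a module over $\Hom^*_{\T}(X,X)$ via precomposition and over $\Hom^*_{\T}(Y,Y)$ via postcomposition, and through the coordinate algebra homomorphisms $\varphi_X,\varphi_Y$ these restrict to the $R$-module structure on $\Hom^*_{\T}(X,Y)$. So let $\mathfrak{p}\in\mathbb{X}$ with $\Hom^*_{\T}(X,X)_{\mathfrak{p}}=0$. Since $1_X=\varphi_X(1)$ becomes $0$ in this localization, there is a homogeneous $s\in R\setminus\mathfrak{p}$ with $\varphi_X(s)=s\cdot 1_X=0$ in $\Hom^*_{\T}(X,X)$; precomposing any homogeneous $g\in\Hom^*_{\T}(X,Y)$ with $\varphi_X(s)$ gives $sg=0$, so $\Hom^*_{\T}(X,Y)_{\mathfrak{p}}=0$. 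The symmetric argument with $\varphi_Y$ and postcomposition handles $\supp_{\mathbb{X}}(Y)$, giving $\supp_{\mathbb{X}}(X,Y)\subseteq\supp_{\mathbb{X}}(X)\cap\supp_{\mathbb{X}}(Y)$.

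For (2)(a): apply the cohomological functor $\Hom_{\T}(-,\s^nY)$ to the triangle $X\xrightarrow{\varphi_X(r)}\s^{|r|}X\to X/\!\!/r\to\s X$ and assemble over all $n\in\mathbb{Z}$ into a long exact sequence of graded $R$-modules. Under the identification $\Hom_{\T}(\s^{|r|}X,\s^nY)\cong\Hom_{\T}(X,\s^{n-|r|}Y)$ the map induced by $\varphi_X(r)$ becomes multiplication by $r$ on $E:=\Hom^*_{\T}(X,Y)$ — this is the one computation to carry out carefully, using the definition $gf=(\s^{|f|}g)\circ f_X$ of the central action. The long exact sequence then yields, up to degree shifts, a short exact sequence
\[ 0\to E/rE\to\Hom^*_{\T}(X/\!\!/r,Y)\to\{e\in E : re=0\}\to 0 . \]
Since $R$ is Noetherian, $\efg R$ is closed under submodules, quotients and extensions, and both outer terms agree in large degrees with the corresponding cokernel and kernel for the genuinely finitely generated module $E_{\geq n_0}$; hence $E\in\efg R$ forces $\Hom^*_{\T}(X/\!\!/r,Y)\in\efg R$. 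For the support equality, fix $\mathfrak{p}\in\mathbb{X}$ and localize the long exact sequence at $\mathfrak{p}$. If $r\notin\mathfrak{p}$ then multiplication by $r$ is invertible on $E_{\mathfrak{p}}$, so $\Hom^*_{\T}(X/\!\!/r,Y)_{\mathfrak{p}}=0$; if $\mathfrak{p}\in\V_{\mathbb{X}}(r)$ but $E_{\mathfrak{p}}=0$, then trivially $\Hom^*_{\T}(X/\!\!/r,Y)_{\mathfrak{p}}=0$; and if $\mathfrak{p}\in\V_{\mathbb{X}}(r)\cap\supp_{\mathbb{X}}(X,Y)$ and we assume $\Hom^*_{\T}(X/\!\!/r,Y)_{\mathfrak{p}}=0$, the long exact sequence shows multiplication by $r$ is surjective on $E_{\mathfrak{p}}$. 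But $E_{\mathfrak{p}}\cong(E_{\geq n_0})_{\mathfrak{p}}$ is finitely generated over the local ring $R_{\mathfrak{p}}$ — here one uses Remark~\ref{rem.cut_doesnt_matter}(1) to discard the truncated part — and $r\in\mathfrak{p}R_{\mathfrak{p}}$, so Nakayama forces $E_{\mathfrak{p}}=0$, a contradiction. This gives $\supp_{\mathbb{X}}(X/\!\!/r,Y)=\V_{\mathbb{X}}(r)\cap\supp_{\mathbb{X}}(X,Y)$.

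Part (2)(b) runs verbatim with the covariant functor $\Hom_{\T}(Y,\s^n(-))$ in place of $\Hom_{\T}(-,\s^nY)$: the map it induces on $\Hom^*_{\T}(Y,X)$ is postcomposition with $\varphi_X(r)$, which is the left $R$-action and hence equals multiplication by $r$ up to sign — and signs are irrelevant for kernels, cokernels and supports. Thus the only real work is the single identification of the induced map with (plus or minus) multiplication by $r$, together with the bookkeeping needed to pass freely between ``eventually finitely generated'' and ``finitely generated'' via truncation; the one non-formal ingredient is Nakayama's lemma, which is precisely what delivers the nontrivial inclusion $\V_{\mathbb{X}}(r)\cap\supp_{\mathbb{X}}(X,Y)\subseteq\supp_{\mathbb{X}}(X/\!\!/r,Y)$.
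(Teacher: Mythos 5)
Your argument is correct and matches the paper's approach: for part (1) the paper gives the same one-line observation that the $R$-action on $\Hom^*_{\T}(X,Y)$ factors through both $\varphi_X$ and $\varphi_Y$, and for part (2) it simply defers to \cite[Theorem 3.3]{BIKO} and \cite[Proposition 3.10]{AvramovIyengar}. The long-exact-sequence computation identifying the induced map with multiplication by $r$, the truncation step reducing to honestly finitely generated modules, and the appeal to Nakayama's lemma that you spell out are exactly the standard content of those cited results.
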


\begin{proof}
Property (1) follows from the fact that $R$ acts on $\Hom^*_{\T}(X,Y)$ through both the ring homomorphisms $R \xrightarrow{\varphi_X} \Hom^*_{\T}(X,X)$ and $R \xrightarrow{\varphi_Y} \Hom^*_{\T}(Y,Y)$. The second property follows from the proof of \cite[Theorem 3.3]{BIKO}, see also \cite[Proposition 3.10]{AvramovIyengar}.
\end{proof}

Next, we define support symmetry at a given (closed) point $\mathfrak{p} \in \mathbb{X}$.

\begin{definition} \label{def.local_symm}
In the situation of Setup~\ref{setup.triang_supp}, let $\mathfrak{p} \in \mathbb{X}$. We say that $\T$ satisfies \emph{symmetry at $\mathfrak{p}$} if for all objects $X$ and $Y$ with $\supp_{\mathbb{X}}(X) \cup \supp_{\mathbb{X}}(Y) \subseteq \{ \mathfrak{p} \}$, the equality 
$$\supp_{\mathbb{X}} (X,Y) = \supp_{\mathbb{X}} (Y,X)$$
holds.
\end{definition}

\begin{remark}
In the setup of the previous definition, the inclusions $\supp_{\mathbb{X}} (X, Y) \subseteq \{ \mathfrak{p} \}$ and $\supp_{\mathbb{X}} (Y, X) \subseteq \{ \mathfrak{p} \}$ hold, by Lemma~\ref{ElementaryProperties}(1). Therefore, the assertion is equivalent to
\[ \supp_{\mathbb{X}} (X, Y) = \emptyset \quad \Longleftrightarrow \quad \supp_{\mathbb{X}} (Y, X) = \emptyset. \]
If $\Hom_{\T}^*(X, Y) \oplus \Hom_{\T}^*(Y, X) \in \efg R$, then this in turn is equivalent to
\[ \Hom(X, \s^n Y) = 0 \; \forall n \gg 0 \quad \Longleftrightarrow \quad \Hom(Y, \s^n X) = 0 \; \forall n \gg 0, \]
by Remark~\ref{rem.cut_doesnt_matter}(3). In other words, symmetry at a point means cohomological symmetry for objects whose support is this point.
\end{remark}

We now show that if $\T$ satisfies symmetry at a closed point, then this point ``occurs symmetrically" in the supports of objects.

\begin{proposition} \label{prop.OccursSymmetrically}
In the situation of Setup~\ref{setup.triang_supp}, suppose $\T$ satisfies symmetry at a closed point $\mathfrak{p} \in \mathbb{X}$, and let $X$ and $Y$ be objects in $\T$ satisfying $\Hom^*_{\T}(X,Y) \oplus \Hom^*_{\T}(Y,X) \in \efg R$.
Then 
\[ \mathfrak{p} \in \supp_{\mathbb{X}}(X,Y) \quad \Longleftrightarrow \quad \mathfrak{p} \in \supp_{\mathbb{X}}(Y,X). \]
\end{proposition}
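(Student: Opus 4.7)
The plan is to use the Koszul construction of Lemma~\ref{ElementaryProperties}(2) to reduce $X$ and $Y$ to objects supported only at $\mathfrak{p}$, and then invoke the assumed symmetry at $\mathfrak{p}$.

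Since $\mathfrak{p}$ is a closed point of $\mathbb{X} = \Proj R$ and $R$ is of finite type, I would first choose homogeneous elements $r_1, \ldots, r_c \in R_+$ with $\V_{\mathbb{X}}(r_1, \ldots, r_c) = \{\mathfrak{p}\}$, and form the iterated Koszul objects $X' = X /\!\!/ r_1 /\!\!/ \cdots /\!\!/ r_c$ and $Y' = Y /\!\!/ r_1 /\!\!/ \cdots /\!\!/ r_c$. Starting from the hypothesis $\Hom^*_{\T}(X,Y) \in \efg R$, applying Lemma~\ref{ElementaryProperties}(2)(a) once per $r_i$ in the first slot and then part (b) once per $r_i$ in the second slot gives $\Hom^*_{\T}(X', Y') \in \efg R$ together with
\[
\supp_{\mathbb{X}}(X', Y') = \V_{\mathbb{X}}(r_1, \ldots, r_c) \cap \supp_{\mathbb{X}}(X, Y) = \{\mathfrak{p}\} \cap \supp_{\mathbb{X}}(X, Y),
\]
and the symmetric argument starting from $\Hom^*_{\T}(Y, X) \in \efg R$ yields $\supp_{\mathbb{X}}(Y', X') = \{\mathfrak{p}\} \cap \supp_{\mathbb{X}}(Y, X)$.

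The step I expect to be the main obstacle is verifying the hypothesis $\supp_{\mathbb{X}}(X') \cup \supp_{\mathbb{X}}(Y') \subseteq \{\mathfrak{p}\}$ required to invoke symmetry at $\mathfrak{p}$: the hypothesis of the proposition does not directly provide eventual finite generation of $\Hom^*_{\T}(X,X)$ or $\Hom^*_{\T}(Y,Y)$, so Lemma~\ref{ElementaryProperties}(2) cannot simply be reused with $Y = X$. I would handle this by showing, directly from the defining triangle $Z \xrightarrow{\varphi_Z(r)} \s^{|r|}Z \xrightarrow{u} Z /\!\!/ r \xrightarrow{v} \s Z$, that $\varphi_{Z /\!\!/ r}(r)^2 = 0$ in $\Hom^*_{\T}(Z /\!\!/ r, Z /\!\!/ r)$ for any object $Z$ and homogeneous $r \in R$: naturality of $r$ combined with $u \circ \varphi_Z(r) = 0$ forces $\varphi_{Z /\!\!/ r}(r) \circ u = 0$, so $\varphi_{Z /\!\!/ r}(r)$ factors as $w \circ v$ for some $w$; applying naturality again to $v$ together with the fact that $\s \varphi_Z(r) \circ v = 0$ (consecutive arrows in the rotated triangle) forces $\s^{|r|}(v) \circ w \circ v = 0$, so $\varphi_{Z /\!\!/ r}(r)^2 = \s^{|r|}(w) \circ \s^{|r|}(v) \circ w \circ v = 0$. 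Consequently $r^2$ annihilates $\Hom^*_{\T}(Z /\!\!/ r, W)$ for every $W$, so $\supp_{\mathbb{X}}(Z /\!\!/ r) \subseteq \V_{\mathbb{X}}(r)$, and iterating yields $\supp_{\mathbb{X}}(X'), \supp_{\mathbb{X}}(Y') \subseteq \V_{\mathbb{X}}(r_1, \ldots, r_c) = \{\mathfrak{p}\}$.

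With these inclusions in hand, symmetry at $\mathfrak{p}$ applied to $X'$ and $Y'$ produces $\supp_{\mathbb{X}}(X', Y') = \supp_{\mathbb{X}}(Y', X')$, and since both sides are subsets of $\{\mathfrak{p}\}$, combining with the computations above gives
\[
\mathfrak{p} \in \supp_{\mathbb{X}}(X, Y) \Longleftrightarrow \supp_{\mathbb{X}}(X', Y') \neq \emptyset \Longleftrightarrow \supp_{\mathbb{X}}(Y', X') \neq \emptyset \Longleftrightarrow \mathfrak{p} \in \supp_{\mathbb{X}}(Y, X),
\]
which is the desired equivalence.
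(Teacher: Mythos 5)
Your proof is correct and follows essentially the same route as the paper: cut $X$ and $Y$ down by Koszul objects on generators of $\mathfrak{p}$, track the supports via Lemma~\ref{ElementaryProperties}(2), and then invoke symmetry at $\mathfrak{p}$. Your explicit verification that $\varphi_{Z/\!\!/r}(r)^2 = 0$, giving $\supp_{\mathbb{X}}(X'), \supp_{\mathbb{X}}(Y') \subseteq \{\mathfrak{p}\}$ without assuming $\Hom^*_{\T}(X,X)$ or $\Hom^*_{\T}(Y,Y)$ lies in $\efg R$, is a detail the paper leaves implicit, and it is handled correctly.
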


\begin{proof}
Since $\mathfrak{p}$ is a homogeneous ideal, there are homogeneous elements $r_1, \dots, r_c$ in $R$ such that $\mathfrak{p} = (r_1, \ldots, r_c)$. Thus $\{ \mathfrak{p} \} = V_{\mathbb{X}}(r_1) \cap \cdots \cap V_{\mathbb{X}}(r_c)$. By Lemma~\ref{ElementaryProperties}(1), for each $i$
\[ \supp_{\mathbb{X}} (X  /\!\!/ r_i, Y) = \V_{\mathbb{X}}(r_i) \cap \supp_{\mathbb{X}}(X, Y), \]
and hence
\[ \mathfrak{p} \in \supp_{\mathbb{X}} (X  /\!\!/ r_i, Y) \quad \Longleftrightarrow \quad \mathfrak{p} \in \supp_{\mathbb{X}} (X, Y). \]
Similarly
\[ \mathfrak{p} \in \supp_{\mathbb{X}} (Y, X  /\!\!/ r_i) \quad \Longleftrightarrow \quad \mathfrak{p} \in \supp_{\mathbb{X}} (Y, X). \]
Iterating this process as long as necessary, we see that we can replace $X$ by an object whose support is contained in $\{ \mathfrak{p} \}$. Similarly, we can replace $Y$ by an object whose support is contained in $\{ \mathfrak{p} \}$. The result then follows from the definition of symmetry at $\mathfrak{p}$.
\end{proof}

Next, we prove that a local-global principle holds for support symmetry.

\begin{theorem}\label{GlobalSymmetry}
Assume in the situation of Setup~\ref{setup.triang_supp} that for any $X, Y \in \T$, the graded module $\Hom^*_{\T}(X,Y)$ lies in $\efg R$. Then the following are equivalent.
\begin{enumerate}
\item For any closed point $\mathfrak{p} \in \mathbb{X}$, the category $\T$ satisfies symmetry at $\mathfrak{p}$.
\item For any $X, Y \in \T$, the equality
\[ \supp_{\mathbb{X}} (X, Y) = \supp_{\mathbb{X}} (Y, X) \]
holds.
\end{enumerate}
\end{theorem}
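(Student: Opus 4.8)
The plan is to derive the theorem directly from Proposition~\ref{prop.OccursSymmetrically}, using the observation that the relevant supports are closed subsets of the Jacobson scheme $\mathbb{X}$, hence are determined by the closed points they contain.

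The implication (2)~$\Rightarrow$~(1) is immediate: given objects $X$ and $Y$ with $\supp_{\mathbb{X}}(X) \cup \supp_{\mathbb{X}}(Y) \subseteq \{ \mathfrak{p} \}$, statement (2) yields $\supp_{\mathbb{X}}(X,Y) = \supp_{\mathbb{X}}(Y,X)$, which is exactly what Definition~\ref{def.local_symm} demands for symmetry at $\mathfrak{p}$. (The standing hypothesis of the theorem guarantees $\Hom^*_{\T}(X,Y) \oplus \Hom^*_{\T}(Y,X) \in \efg R$, so there is no finiteness issue.) For the converse (1)~$\Rightarrow$~(2), fix $X, Y \in \T$. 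By hypothesis both $\Hom^*_{\T}(X,Y)$ and $\Hom^*_{\T}(Y,X)$ lie in $\efg R$, so Remark~\ref{rem.cut_doesnt_matter}(2) shows that $\supp_{\mathbb{X}}(X,Y)$ and $\supp_{\mathbb{X}}(Y,X)$ are both closed subsets of $\mathbb{X}$. Since $R$ is a finitely generated $k$-algebra, $\mathbb{X} = \Proj R$ is a Jacobson scheme, so every closed subset of $\mathbb{X}$ is the closure of its set of closed points; consequently two closed subsets of $\mathbb{X}$ coincide as soon as they contain the same closed points. It therefore suffices to check that, for each closed point $\mathfrak{p} \in \mathbb{X}$, one has $\mathfrak{p} \in \supp_{\mathbb{X}}(X,Y)$ if and only if $\mathfrak{p} \in \supp_{\mathbb{X}}(Y,X)$. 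But hypothesis (1) says that $\T$ satisfies symmetry at every such $\mathfrak{p}$, and the finiteness hypothesis of Proposition~\ref{prop.OccursSymmetrically} is satisfied, so that proposition gives precisely this equivalence. Hence $\supp_{\mathbb{X}}(X,Y) = \supp_{\mathbb{X}}(Y,X)$.

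The only step that needs a word of justification beyond the results already established is the Jacobson property of $\mathbb{X}$: one covers $\Proj R$ by the affine opens $\Spec R_{(f)}$ with $f$ running over homogeneous elements of positive degree, each $R_{(f)}$ is a finitely generated $k$-algebra and hence a Jacobson ring, and a scheme admitting a finite cover by spectra of Jacobson rings is Jacobson, so closed points are very dense in $\mathbb{X}$. I expect this purely commutative-algebra point to be the main obstacle; everything triangulated has already been packaged into Proposition~\ref{prop.OccursSymmetrically} and Lemma~\ref{ElementaryProperties}, so the remainder of the argument is a formal manipulation of closed sets.
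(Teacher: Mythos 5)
Your proof is correct and follows essentially the same route as the paper's: the implication (2)~$\Rightarrow$~(1) is immediate, and for (1)~$\Rightarrow$~(2) you invoke Proposition~\ref{prop.OccursSymmetrically} to match closed points and then use that $\mathbb{X}$ is of finite type over $k$, hence Jacobson, so closed subsets are determined by their closed points. Your extra paragraph justifying the Jacobson property is a welcome elaboration but not a different argument.
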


\begin{proof}
The implication (2) $\Rightarrow$ (1) is trivial, since (1) is just a special case of (2). Assume therefore that (1) holds. It follows from Proposition~\ref{prop.OccursSymmetrically} that the closed points of $\supp_{\mathbb{X}} (X, Y)$ and of $\supp_{\mathbb{X}} (Y, X)$ coincide. Since $\mathbb{X}$ is of finite type over $k$, the closed points determine every closed subset.
\end{proof}

As an immediate corollary, local support symmetry implies global vanishing symmetry.

\begin{corollary} \label{GlobalVanishingSymmetry}
Assume in the situation of Setup~\ref{setup.triang_supp} that for any $X, Y \in \T$, the graded module $\Hom^*_{\T}(X,Y)$ lies in $\efg R$. Moreover, suppose that for any closed point $\mathfrak{p} \in \mathbb{X}$, the category $\T$ satisfies symmetry at $\mathfrak{p}$. Then
\[ \Hom_{\T}(X, \s^nY)=0 \; \forall n \gg 0 \quad \Longleftrightarrow \quad \Hom_{\T}(Y, \s^nX)=0 \; \forall n \gg 0 \]
for all objects $X$ and $Y$.
\end{corollary}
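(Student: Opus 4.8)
The plan is to obtain this as a direct consequence of Theorem~\ref{GlobalSymmetry}. Under the standing finiteness hypothesis, that theorem tells us that symmetry at every closed point is equivalent to the equality $\supp_{\mathbb{X}}(X,Y) = \supp_{\mathbb{X}}(Y,X)$ for all objects $X$ and $Y$; so the assumption of the corollary already hands us this equality of supports. What remains is purely a matter of translating the cohomological vanishing condition into the language of supports.

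For this translation I would argue as follows. Fix objects $X$ and $Y$ and set $M = \Hom^*_{\T}(X,Y)$, which lies in $\efg R$ by hypothesis. The graded components $M_n = \Hom_{\T}(X, \s^n Y)$ have non-negative integer dimensions over $k$, so the condition $\dim_k M_n \leq C n^{\alpha - 1}$ for $n \gg 0$ with $\alpha = 0$ reads $\dim_k M_n \leq C/n$ and hence forces $M_n = 0$ for $n \gg 0$; conversely, eventual vanishing trivially makes $\growth M = 0$. Thus $\Hom_{\T}(X, \s^n Y) = 0$ for all $n \gg 0$ if and only if $\growth M = 0$, which by Remark~\ref{rem.cut_doesnt_matter}(3) holds if and only if $\supp_{\mathbb{X}}(X,Y) = \emptyset$. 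The same reasoning applied to $\Hom^*_{\T}(Y,X)$ shows that $\Hom_{\T}(Y, \s^n X) = 0$ for all $n \gg 0$ if and only if $\supp_{\mathbb{X}}(Y,X) = \emptyset$.

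Combining the two, the equality $\supp_{\mathbb{X}}(X,Y) = \supp_{\mathbb{X}}(Y,X)$ immediately yields the desired equivalence: the left-hand side vanishes for $n \gg 0$ exactly when its support is empty, exactly when the other support is empty, exactly when the right-hand side vanishes for $n \gg 0$. There is no genuine obstacle here; the only points that need a word of care are that the finiteness hypothesis $\Hom^*_{\T}(X,Y) \in \efg R$ is used twice -- once to invoke Theorem~\ref{GlobalSymmetry} and once to apply Remark~\ref{rem.cut_doesnt_matter}(3) -- and that the rate of growth is computed only over the non-negative part of the $\mathbb{Z}$-grading, which is harmless since the statement concerns the behaviour for $n \gg 0$.
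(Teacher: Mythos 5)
Your proof is correct and is precisely the argument the paper leaves implicit when it calls this an "immediate corollary" of Theorem~\ref{GlobalSymmetry}: invoke the theorem to get $\supp_{\mathbb{X}}(X,Y) = \supp_{\mathbb{X}}(Y,X)$, then use Remark~\ref{rem.cut_doesnt_matter}(3) to translate emptiness of support into eventual vanishing of the graded pieces. No issues.
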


\section{Applications} \label{sect.applic}

In this final section, we turn to modules over algebras. Throughout, all modules considered are finitely generated left modules. 

Let $k$ be a field and $c \ge 1$ an integer. The \emph{exterior algebra} on $c$ generators is the algebra
$$\Lambda = k \langle X_1, \dots, X_c \rangle / (X_i^2, \{ X_iX_j+X_jX_i \}_{i \neq j}).$$
It is local, selfinjective, and of dimension $2^c$. Our aim is to prove that 
\[ \Ext_{\Lambda}^n(M,N)=0 \; \forall n \gg 0 \quad \Longleftrightarrow \quad \Ext_{\Lambda}^n(N,M)=0 \; \forall n \gg 0 \]
holds for all $\Lambda$-modules $M$ and $N$, by applying the results from the previous section.

\subsection{Gorenstein algebras}

Let us first more generally consider a finite dimensional Gorenstein algebra $\Lambda$ (that is $\id \Lambda_{\Lambda}$ and $\id \,_{\Lambda}\Lambda$ are both finite). We work in the stable category of maximal Cohen-Macaulay modules, denoted $\stCM \Lambda$.  Its objects are the maximal Cohen-Macaulay $\Lambda$-modules, that is, the modules satisfying $\Ext_{\Lambda}^i(M, \Lambda) = 0$ for $i > 0$. The morphism spaces are of the form
$$\sthom_{\Lambda} (M,N) = \Hom_{\Lambda}(M,N)/P_{\Lambda}(M,N),$$
where $P_{\Lambda}(M,N)$ is the subspace of $\Hom_{\Lambda}(M,N)$ consisting of the homomorphisms that factor through projective modules. The stable category of maximal Cohen-Macaulay modules is triangulated, with suspension $\s$ given by $\s^{-1} = \Omega_{\Lambda}$, the syzygy functor. Ordinary cohomology can be interpreted as cohomology in $\stCM \Lambda$: for maximal Cohen-Macaulay modules $M$ and $N$, the vector spaces $\Ext_{\Lambda}^n(M,N)$ and $\sthom_{\Lambda}(M, \Sigma^n N)$ are isomorphic for all $n \ge 1$.

By a classical result of Gerstenhaber (cf.\ \cite{Gerstenhaber}), the Hochschild cohomology ring $\HH^*( \Lambda )$ of any finite dimensional algebra $\Lambda$ is graded-commutative. Moreover, by \cite[Section 10]{Solberg} and \cite[Section 3]{BuchweitzFlenner}, it acts centrally on the bounded derived category of $\Lambda$-modules, via tensor products. In the case when $\Lambda$ is Gorenstein, this central ring action induces a central ring action on $\stCM \Lambda$ (cf.\ \cite[Remark 5.1]{BIKO}), since $\stCM \Lambda \cong {\rm D^b}(\Lambda) / {\rm per}\, \Lambda$.

Our aim in this subsection is to to apply Theorem~\ref{GlobalSymmetry} to Gorenstein algebras. We therefore assume the following finiteness condition.

\begin{setup} \label{setup.fg_mod}
\sloppy Let $\Lambda$ be a finite dimensional Gorenstein algebra. Denote by $\HH^{\ev}(\Lambda)$ the even part of the Hochschild cohomology ring $\HH^*( \Lambda )$, and write $\mathbb{X} = \Proj \HH^{\ev}(\Lambda)$. Assume $\HH^{\ev}(\Lambda)$ is a finitely generated $k$ algebra, and for all maximal Cohen-Macaulay $\Lambda$-modules $M$ and $N$, the $\HH^{\ev}(\Lambda)$-module $\Ext_{\Lambda}^*(M, N) = \oplus_{n \geq 1} \Ext_{\Lambda}^n(M, N)$ is finitely generated.
\end{setup}

We shall see in the next subsection that these assumptions are satisfied when $\Lambda$ is an exterior algebras. The following result then reduces the question of cohomological symmetry to the periodic modules, i.e.\ the modules
$M$ for which there is an integer $n \in \mathbb{N}$ such that $\Omega^n_{\Lambda} M \cong M$.

\begin{theorem} \label{theorem.gorenstein}
In the setup above, the following are equivalent.
\begin{enumerate}
\item For all periodic $\Lambda$-modules $M$ and $N$ we have
\[ \Ext_{\Lambda}^n(M, N) = 0 \; \forall n \gg 0 \quad \Longleftrightarrow \quad \Ext_{\Lambda}^n(N, M) = 0 \; \forall n \gg 0. \]
\item For all $\Lambda$-modules $M$ and $N$ we have
\[ \Ext_{\Lambda}^n(M, N) = 0 \; \forall n \gg 0 \quad \Longleftrightarrow \quad \Ext_{\Lambda}^n(N, M) = 0 \; \forall n \gg 0. \]
\item For all $\Lambda$-modules $M$ and $N$ we have
\[ \growth(\Ext_{\Lambda}^*(M, N)) = \growth(\Ext_{\Lambda}^*(N, M)). \]
\item For all $\Lambda$-modules $M$ and $N$ we have
\[ \supp_{\mathbb{X}} (M, N) = \supp_{\mathbb{X}} (N, M). \]
\end{enumerate}
\end{theorem}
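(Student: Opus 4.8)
The plan is to deduce Theorem~\ref{theorem.gorenstein} from the triangulated-category machinery of Section~\ref{sect.triang} applied to $\T = \stCM\Lambda$, with $R = \HH^{\ev}(\Lambda)$ acting centrally as discussed above. Under Setup~\ref{setup.fg_mod}, the finiteness hypothesis of Theorem~\ref{GlobalSymmetry} is satisfied: for maximal Cohen-Macaulay modules $M$ and $N$, the graded group $\sthom_\Lambda^*(M,N) = \bigoplus_n \sthom_\Lambda(M,\s^n N)$ agrees with $\Ext_\Lambda^*(M,N)$ in all degrees $n \geq 1$ and differs only in a bounded range of degrees, hence lies in $\efg R$ by Remark~\ref{rem.cut_doesnt_matter}(1). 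An arbitrary $\Lambda$-module is linked via a finite syzygy to a maximal Cohen-Macaulay module (since $\Lambda$ is Gorenstein, sufficiently high syzygies are maximal Cohen-Macaulay), and passing to a syzygy only shifts the $\Ext$-modules in high degrees; so it suffices throughout to work with MCM modules, i.e.\ with objects of $\T$. The equivalences (2) $\Leftrightarrow$ (3) $\Leftrightarrow$ (4) are then immediate from Remark~\ref{rem.cut_doesnt_matter}(3), which identifies vanishing of $\Ext$ in high degrees, vanishing of the support, and the rate of growth being zero, and more generally equates $\growth$ with $1 + \dim\supp_{\mathbb{X}}$; since two closed subsets of $\mathbb{X}$ are equal iff they are equal and the support is determined by its closed points, (4) is exactly statement (2) of Theorem~\ref{GlobalSymmetry}, which in turn holds iff $\T$ satisfies symmetry at every closed point $\mathfrak{p}\in\mathbb{X}$.

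It remains to show (1) $\Leftrightarrow$ (2), and the substantive direction is (1) $\Rightarrow$ (2). By Theorem~\ref{GlobalSymmetry} it is enough to prove that (1) implies symmetry at each closed point $\mathfrak{p}$, i.e.\ that for MCM modules $M$, $N$ with $\supp_{\mathbb{X}}(M) \cup \supp_{\mathbb{X}}(N) \subseteq \{\mathfrak{p}\}$ one has $\supp_{\mathbb{X}}(M,N) = \emptyset \Leftrightarrow \supp_{\mathbb{X}}(N,M) = \emptyset$. The key observation is that the condition $\supp_{\mathbb{X}}(M) \subseteq \{\mathfrak{p}\}$ forces $\growth(\Ext_\Lambda^*(M,M)) \leq 1$ (by Remark~\ref{rem.cut_doesnt_matter}(3), since $\dim\{\mathfrak{p}\} = 0$), i.e.\ $M$ has complexity at most one, which over a selfinjective (indeed Gorenstein) algebra means $M$ is eventually periodic: some syzygy $\Omega^s_\Lambda M$ satisfies $\Omega^n_\Lambda(\Omega^s_\Lambda M) \cong \Omega^s_\Lambda M$. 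Replacing $M$ and $N$ by such syzygies changes neither side of the desired vanishing equivalence (it only shifts the $\Ext$-modules in high degrees). Thus the closed-point symmetry statement reduces precisely to the case of periodic modules, which is hypothesis (1). The converse (2) $\Rightarrow$ (1) is trivial since periodic modules are a special case.

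The main obstacle I anticipate is justifying cleanly that ``complexity $\leq 1$ implies eventually periodic'' in the present generality, and that one may freely replace modules by their syzygies without affecting the statements — both are standard over selfinjective algebras (a minimal projective resolution with bounded Betti numbers that is eventually of constant rank one must become periodic, by a pigeonhole/Fitting-type argument on the finitely many indecomposables of that rank, together with Krull--Schmidt), but for a general finite dimensional Gorenstein algebra one should phrase this inside $\stCM\Lambda$: the hypothesis $\supp_{\mathbb{X}}(M)\subseteq\{\mathfrak p\}$ says $\Hom_\T^*(M,M)$ is finitely generated over the Artinian ring $R_\mathfrak{p}$-sheaf-theoretically of dimension zero, hence $\dim_k\sthom_\Lambda(M,\s^n M)$ is bounded, and then $M$ lies in the thick subcategory generated by an object on which $\s$ acts periodically. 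Care is also needed to ensure the reduction from arbitrary modules to MCM modules is compatible with the central $\HH^{\ev}(\Lambda)$-action, but this is exactly the content of the identification $\stCM\Lambda \cong \mathrm{D^b}(\Lambda)/\mathrm{per}\,\Lambda$ together with the fact that the Hochschild action commutes with syzygies. Once these points are in place, the theorem follows by assembling the equivalences above.
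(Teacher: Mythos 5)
Your overall strategy matches the paper's almost exactly: reduce to maximal Cohen--Macaulay modules by taking syzygies, observe that $\supp_{\mathbb{X}}(M)\subseteq\{\mathfrak p\}$ forces $\growth(\Ext^*_\Lambda(M,M))\le 1$ by Remark~\ref{rem.cut_doesnt_matter}(3), translate that to (eventual) periodicity, and then invoke Theorem~\ref{GlobalSymmetry} to pass from symmetry at closed points to global support symmetry. Two points need tightening, though.

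First, the step ``$\growth(\Ext^*_\Lambda(M,M))\le 1$ implies $M$ is eventually periodic'' is the genuine content of the theorem, and the paper gets it by citing \cite[Theorem~2.3]{Bergh1}, which proves precisely this under the finiteness hypotheses of Setup~\ref{setup.fg_mod}. Your sketched replacement — a pigeonhole/Fitting argument ``on the finitely many indecomposables of that rank'' — does not work in general: over a finite dimensional algebra there are usually infinitely many isomorphism classes of indecomposable modules of any fixed dimension, so bounded Betti numbers alone do not force repetition up to isomorphism. The correct input really is the Fg-style hypothesis, and the argument is not elementary. You flag this as an obstacle, which is honest, but as written the proof has a hole exactly there, and one should cite or reprove Bergh's result rather than hope for a pigeonhole.

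Second, a minor logical slip: you claim $(2)\Leftrightarrow(3)\Leftrightarrow(4)$ are ``immediate'' from Remark~\ref{rem.cut_doesnt_matter}(3). The remark gives $(4)\Rightarrow(3)\Rightarrow(2)$ (support determines growth, growth determines vanishing), but not the converses — knowing $\Ext$-vanishing symmetry for all pairs does not directly yield equality of growth rates or supports. The paper closes the cycle by proving $(1)\Rightarrow(4)$, which is also what your argument actually establishes (via Theorem~\ref{GlobalSymmetry}), even though you describe it as proving $(1)\Rightarrow(2)$. The net effect is fine, since $(1)\Rightarrow(4)\Rightarrow(3)\Rightarrow(2)\Rightarrow(1)$ gives all the equivalences; but the intermediate claim as stated is an overclaim and should be replaced by the explicit cycle.
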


\begin{proof}
The implications (3) $\Rightarrow$ (2) $\Rightarrow$ (1) are clear, whereas the implication (4) $\Rightarrow$ (3) follows from Remark~\ref{rem.cut_doesnt_matter}(3). It remains therefore to show (1) $\Rightarrow$ (4).

Let $g = \id \,_{\Lambda}\Lambda$. Then for any module $M$, the $g$-th syzygy $\Omega^g_{\Lambda} M$ is a maximal Cohen-Macaulay module. For $n > g$, there are isomorphisms 
\[ \Ext_{\Lambda}^n(M, N) \cong \Ext_{\Lambda}^{n-g}(\Omega^g_{\Lambda} M, N) \cong \Ext_{\Lambda}^n(\Omega^g_{\Lambda} M, \Omega^g_{\Lambda} N). \]
Since the support only depends on the behavior of $\Ext_{\Lambda}^n(-,-)$ for large $n$, we may assume that $M$ and $N$ are maximal Cohen-Macaulay modules. Since 
$$\Hom_{\stCM \Lambda}(M, \s^n N) \cong \Ext_{\Lambda}^n(M, N)$$
for $n \ge 1$, the assumptions of Theorem~\ref{GlobalSymmetry} are satisfied (with $\T = \stCM \Lambda$).

By \cite[Theorem~2.3]{Bergh1}, any module $M$ with $\growth( \Ext_{\Lambda}^*(M,M)) \leq 1$ is eventually periodic, and it follows that any such module which in addition is maximal Cohen-Macaulay is periodic. Now we can complete the proof as follows, where we use the notation of Theorem~\ref{GlobalSymmetry} for $\T = \stCM \Lambda$:
\begin{align*}
\text{(1)} & \Longrightarrow \forall M, N \in \CM \Lambda \text{ with } \growth( \Ext_{\Lambda}^*(M,M) \oplus \Ext_{\Lambda}^*(N,N)) \leq 1 \colon \\
& \qquad \qquad \Ext_{\Lambda}^n(M, N) = 0 \; \forall n \gg 0 \; \Leftrightarrow \; \Ext_{\Lambda}^n(N, M) = 0 \; \forall n \gg 0 \\
& \overset{\text{\ref{rem.cut_doesnt_matter}(3)}}{\Longrightarrow} \forall M, N \in \CM \Lambda \text{ with } \dim (\supp_{\mathbb{X}} (M,M) \cup \supp_{\mathbb{X}} (N,N)) = 0 \colon \\
& \qquad \qquad \supp_{\mathbb{X}} (M, N) = \emptyset \; \Leftrightarrow \; \supp_{\mathbb{X}} (N, M) = \emptyset \\
& \overset{\text{\ref{GlobalSymmetry}}}{\Longrightarrow} \forall M, N \in \CM \Lambda \colon \supp_{\mathbb{X}} (M, N) = \supp_{\mathbb{X}} (N, M). \qedhere
\end{align*}
\end{proof}

\subsection{Exterior algebras}

In this final subsection we specialize to the case when $\Lambda$ is an exterior algebra. In this case, it was proved in \cite{BerghOppermann} that the Hochschild cohomology ring $\HH^*(\Lambda)$ is of finite type, and that $\Ext_{\Lambda}^*(M,N)$ is a finitely generated $\HH^*(\Lambda)$-module for all $\Lambda$-modules $M$ and $N$. The even part $\HH^*(\Lambda)^{\ev}$ of $\HH^*(\Lambda)$ is therefore a commutative algebra of finite type, and the finiteness condition of Setup~\ref{setup.fg_mod} holds: for all $M$ and $N$, the $\HH^*(\Lambda)^{\ev}$-module $\Ext^*_{\Lambda}(M,N)$ is finitely generated over $\HH^*(\Lambda)^{\ev}$.

Let $M$ and $N$ be two $\Lambda$-modules, and $\mathbb{X} = \Proj \HH^{\ev}(\Lambda)$.
By definition, the support $\supp_{\mathbb{X}}(M,N)$ is the Zariski closed subset
\begin{align*}
 \supp_{\mathbb{X}}(M,N) & = \{ \mathfrak{p} \in \mathbb{X} \mid \Hom^*_{\stmod \Lambda}(M,N)_{\mathfrak{p}} \neq 0 \} \\
& = \{ \mathfrak{p} \in \mathbb{X} \mid \Ext^*_{\Lambda}(M,N)_{\mathfrak{p}} \neq 0 \}
\end{align*}
of $\mathbb{X}$ (note that the two sets on the right coincide by Remark~\ref{rem.cut_doesnt_matter}(1)). Consequently, our support defined via triangulated categories is in this setup just the projective version of the classical support varieties defined by Snashall and Solberg in \cite{SnashallSolberg}.

Our main result for exterior algebras, which is just an application of Theorem~\ref{theorem.gorenstein}, shows that the order of the modules is irrelevant.

\begin{theorem} \label{SymmetryExterior}
If $\Lambda$ is an exterior algebra, then
\[ \supp_{\mathbb{X}}(M,N) = \supp_{\mathbb{X}}(N,M) \]
for all $\Lambda$-modules $M$ and $N$.
\end{theorem}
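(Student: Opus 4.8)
The plan is to deduce Theorem~\ref{SymmetryExterior} from Theorem~\ref{theorem.gorenstein}, which reduces the problem to verifying the symmetry statement for all \emph{periodic} $\Lambda$-modules $M$ and $N$. Concretely, by equivalence (1) $\Leftrightarrow$ (4) in Theorem~\ref{theorem.gorenstein} (whose hypotheses are met, as the finiteness conditions of Setup~\ref{setup.fg_mod} hold for exterior algebras by \cite{BerghOppermann}), it suffices to show that for periodic modules the vanishing symmetry
\[ \Ext_{\Lambda}^n(M,N) = 0 \; \forall n \gg 0 \quad \Longleftrightarrow \quad \Ext_{\Lambda}^n(N,M) = 0 \; \forall n \gg 0 \]
holds. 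So the real content is a statement about periodic modules over an exterior algebra.

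First I would recall the structure of periodic $\Lambda$-modules. For an exterior algebra $\Lambda$ on $c$ generators, a module $M$ is periodic precisely when its complexity is $1$, equivalently when $\growth(\Ext^*_\Lambda(M,M)) \le 1$, equivalently (by Remark~\ref{rem.cut_doesnt_matter}(3)) when $\supp_{\mathbb{X}}(M,M)$ is a finite set of points. So we are in the extreme case where both $\supp_{\mathbb{X}}(M)$ and $\supp_{\mathbb{X}}(N)$ are finite. Using Proposition~\ref{prop.OccursSymmetrically} together with Theorem~\ref{GlobalSymmetry}, it is enough to prove symmetry \emph{at each closed point} $\mathfrak{p} \in \mathbb{X}$ in the sense of Definition~\ref{def.local_symm}: for $X, Y$ with $\supp_{\mathbb{X}}(X) \cup \supp_{\mathbb{X}}(Y) \subseteq \{\mathfrak{p}\}$, one must show $\supp_{\mathbb{X}}(X,Y) = \emptyset \Leftrightarrow \supp_{\mathbb{X}}(Y,X) = \emptyset$.

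The key step is to identify, for a fixed closed point $\mathfrak{p}$, what the modules supported (only) at $\mathfrak{p}$ look like, and to exploit the very rigid homological algebra of the exterior algebra at such a point. The strategy I would pursue is to pass, via a suitable change of rings or localization in the Hochschild cohomology ring, to the situation of a rank-one hypersurface-like quotient: for exterior algebras the even Hochschild cohomology ring is a polynomial ring $k[\chi_1,\dots,\chi_c]$ (in degree $2$), $\mathbb{X} = \mathbb{P}^{c-1}$, and a module supported at a single point $\mathfrak{p} \in \mathbb{P}^{c-1}$ is governed by the action of the corresponding linear form. One then reduces — after a linear change of variables corresponding to moving $\mathfrak{p}$ to a coordinate point — to analysing modules over a single "Koszul-type" operator, and the desired symmetry becomes the statement that $\Ext^*$ vanishing eventually is symmetric for eventually periodic modules, which for the exterior algebra can be checked directly: a periodic module $M$ has a two-periodic minimal projective resolution determined by a pair of matrices, and $\Ext^n_\Lambda(M,N)$ for large $n$ is computed by applying $\Hom(-,N)$, so eventual vanishing of $\Ext^*_\Lambda(M,N)$ is equivalent to an explicit rank/exactness condition symmetric in $M$ and $N$ (self-injectivity of $\Lambda$ is what makes this work, via the duality $\sthom_\Lambda(M, \s^n N) \cong \sthom_\Lambda(\s^{-n} M, N)$ and the existence of Auslander–Reiten triangles).

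The main obstacle, I expect, is precisely this last reduction: making rigorous the passage from "supported at a closed point $\mathfrak{p}$" to a concrete one-variable (rank-one hypersurface) computation, i.e.\ showing that symmetry at $\mathfrak{p}$ for the stable category of the exterior algebra follows from a direct analysis of periodic modules. One clean route is to invoke the graded case: by \cite[Corollary~4.9 and Theorem~6.6]{Mori} symmetry is already known for all graded modules over any exterior algebra, and for odd $c$ for all modules; combining this with the local-global reduction from Theorem~\ref{theorem.gorenstein} and a Koszul-duality/BGG-type correspondence (which relates $\Lambda$-modules to graded modules over a polynomial ring and, crucially, relates periodic $\Lambda$-modules to modules of dimension $\le 1$ over the dual) should let one transport the known graded symmetry to the needed statement about periodic modules. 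Either way, the technical heart is controlling the behaviour of $\Ext^*_\Lambda(M,N)$ for periodic $M,N$ and confirming the vanishing condition is order-independent; once that is in place, Theorem~\ref{GlobalSymmetry} upgrades it to the full support equality $\supp_{\mathbb{X}}(M,N) = \supp_{\mathbb{X}}(N,M)$ for all modules, which is the assertion of the theorem. $\qed$
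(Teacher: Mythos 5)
Your reduction to periodic modules via Theorem~\ref{theorem.gorenstein}(1)$\Rightarrow$(4) is exactly the right first move, and it agrees with the paper. But the rest of the proposal does not close the argument, and the two concrete mechanisms you offer for handling the periodic case both have genuine problems.

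The ``duality'' you cite, $\sthom_\Lambda(M, \s^n N) \cong \sthom_\Lambda(\s^{-n} M, N)$, is just the adjunction between $\s$ and $\s^{-1}$ in a triangulated category. It never interchanges the roles of $M$ and $N$, so it cannot possibly drive a symmetry statement; it holds over any Gorenstein algebra, including ones where cohomological symmetry fails. What you actually need, and what the paper uses, is Auslander--Reiten (Serre) duality $D\sthom_\Lambda(M, \Omega^n_\Lambda N) \cong \sthom_\Lambda(\Omega^{n+1}_\Lambda N, \Omega^2_\Lambda({_\nu M}))$, which does swap the two arguments but at the price of a twist by the Nakayama automorphism $\nu$. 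The whole content of the periodic case is then to control this twist: one needs the explicit description of $\nu$ for $\Lambda$ (identity for $c$ odd, $X_i \mapsto -X_i$ for $c$ even, by \cite[Lemma~3.1]{Bergh2}) and, in the nontrivial parity, Eisenbud's theorem \cite[Theorem~2.2]{Eisenbud} that for a periodic module the sign-twist ${_\nu M}$ is isomorphic to the syzygy $\Omega_\Lambda M$, so the twist is absorbed into a shift. None of this appears in your sketch; the claim that ``eventual vanishing is equivalent to an explicit rank/exactness condition symmetric in $M$ and $N$'' is asserted but not argued, and the symmetry is precisely the thing in question.

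Your alternative route --- transporting Mori's graded result via Koszul duality / BGG --- is likewise not a proof: periodic $\Lambda$-modules need not be gradable, so one cannot simply quote the graded case, and the BGG correspondence does not obviously send an arbitrary ungraded periodic module anywhere useful. The ``linear change of variables to move $\mathfrak{p}$ to a coordinate point and reduce to a rank-one hypersurface'' step also does not have an evident rigorous meaning at the level of module categories (localizing $\HH^{\ev}(\Lambda)$ is not the same as passing to a quotient algebra). In short: the reduction to periodic modules is right and matches the paper, but the periodic case itself requires the AR-duality-plus-Nakayama-automorphism-plus-Eisenbud argument, which is the heart of the paper's proof and is missing here.
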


Before we give the proof, let us record some immediate corollaries.

\begin{corollary} \label{cor.same_growth}
If $\Lambda$ is an exterior algebra, then for all $\Lambda$-modules $M$ and $N$
\[ \growth( \Ext_{\Lambda}^*(M, N)) = \growth( \Ext_{\Lambda}^*(N, M)). \]
\end{corollary}

\begin{proof}
Follows from Theorem~\ref{theorem.gorenstein} and Theorem~\ref{SymmetryExterior}.
\end{proof}

\begin{corollary}\label{ExtSymmetryExterior}
If $\Lambda$ is an exterior algebra, then the following are equivalent for all $\Lambda$-modules $M$ and $N$:
\begin{enumerate}
\item $\Ext_\Lambda^n(M,N) = 0$ for $n \gg 0$,
\item $\Ext_\Lambda^n(M,N) = 0$ for $n \ge 1$,
\item $\Ext_\Lambda^n(N,M) = 0$ for $n \gg 0$,
\item $\Ext_\Lambda^n(N,M) = 0$ for $n \ge 1$.
\end{enumerate}
\end{corollary}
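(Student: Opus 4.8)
\textbf{Proof plan for Corollary~\ref{ExtSymmetryExterior}.}
The plan is to deduce everything from Theorem~\ref{SymmetryExterior} together with the strong structural facts about exterior algebras, principally that $\Lambda$ is selfinjective, so that in $\stmod \Lambda$ the syzygy functor $\Omega_\Lambda$ is an autoequivalence and $\Ext^n_\Lambda(M,N) \cong \sthom_\Lambda(M, \s^n N)$ for all $n \ge 1$. The implications (2)$\Rightarrow$(1) and (4)$\Rightarrow$(3) are trivial, and Theorem~\ref{SymmetryExterior} gives the equivalence (1)$\Leftrightarrow$(3): under the finiteness condition recorded just before the theorem, $\Ext^*_\Lambda(M,N) \in \efg{\HH^{\ev}(\Lambda)}$, so by Remark~\ref{rem.cut_doesnt_matter}(3) the vanishing $\Ext^n_\Lambda(M,N) = 0$ for $n \gg 0$ is equivalent to $\supp_{\mathbb{X}}(M,N) = \emptyset$, and symmetry of supports then transfers this to $\supp_{\mathbb{X}}(N,M) = \emptyset$, i.e.\ to (3). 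So the only real content is to upgrade ``vanishing for $n \gg 0$'' to ``vanishing for $n \ge 1$'', i.e.\ (1)$\Rightarrow$(2).

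For that step I would argue as follows. Assume $\Ext^n_\Lambda(M,N) = 0$ for $n \gg 0$. Over a selfinjective algebra one can replace $M$ and $N$ by high syzygies without losing any cohomological information: for $n \ge 1$ there are isomorphisms $\Ext^n_\Lambda(M,N) \cong \Ext^{n+1}_\Lambda(\Omega_\Lambda M, N) \cong \Ext^{n}_\Lambda(\Omega_\Lambda M, \Omega_\Lambda N)$, coming from the fact that $\Omega_\Lambda$ is the inverse suspension in the triangulated category $\stmod \Lambda = \stCM \Lambda$. Since $\Ext^n_\Lambda(M,N) = 0$ for $n \gg 0$ means $\sthom_\Lambda(M,\s^n N) = 0$ for $n \gg 0$, applying $\s$ repeatedly shows $\sthom_\Lambda(\s^{-j}M, \s^n N) = 0$ for all $n$ once $j$ is large enough; but $\s^{-j}M = \Omega^j_\Lambda M$, so after passing to a sufficiently high syzygy of $M$ we may assume $\Ext^n_\Lambda(M,N) = 0$ for \emph{all} $n \ge 1$, and then it suffices to show that this forces the original $\Ext^n_\Lambda$ to vanish for $n \ge 1$ as well. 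This reduction works because the syzygy operation only shifts the cohomological degree, so vanishing for all positive degrees is a syzygy-invariant condition; thus it is enough to know that vanishing in all positive degrees is equivalent to vanishing in all sufficiently large degrees, which is exactly the statement that the ``eventually vanishing'' objects in $\stmod \Lambda$ are closed under $\s^{-1}$ and hence under $\Omega_\Lambda$.

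Spelling this out: put $X = M$, $Y = N$ as objects of $\stmod \Lambda$, and suppose $\Hom_{\stmod\Lambda}(X, \s^n Y) = 0$ for $n \ge n_0$. Replacing $X$ by $\Omega^{n_0 - 1}_\Lambda X = \s^{-(n_0-1)}X$ and $Y$ correspondingly, we get $\Hom_{\stmod\Lambda}(X, \s^n Y) = 0$ for all $n \ge 1$; the same holds with $X,Y$ interchanged after a further syzygy shift once we invoke (1)$\Leftrightarrow$(3). Now I would run the argument in $\stCM \Lambda$ and transport back: since syzygy shifts are isomorphisms on all higher $\Ext$ groups, ``$\Ext^n_\Lambda = 0$ for $n \gg 0$'' and ``$\Ext^n_\Lambda = 0$ for $n \ge 1$'' coincide for the original $M, N$ precisely because they coincide for some high-syzygy pair. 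Finally, combining with Theorem~\ref{SymmetryExterior} we get the full cycle: (1)$\Rightarrow$(2) by the syzygy reduction, (2)$\Rightarrow$(1) trivially, (1)$\Leftrightarrow$(3) by support symmetry, and (3)$\Rightarrow$(4)$\Rightarrow$(3) by the same syzygy argument applied to the pair $(N,M)$.

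The main obstacle I anticipate is the bookkeeping in the (1)$\Rightarrow$(2) reduction: one must be careful that passing to syzygies of $M$ genuinely recovers vanishing of $\Ext^n_\Lambda(M,N)$ in the low degrees $1 \le n < n_0$, not merely in high degrees, and this requires using that $\Omega_\Lambda$ is an \emph{autoequivalence} of $\stmod \Lambda$ (selfinjectivity of $\Lambda$), so that $\Omega^j_\Lambda$ is invertible and the degree-shift isomorphisms can be run in both directions; over a merely Gorenstein algebra only the stable category $\stCM \Lambda$ has this property, which is why the sharper statement (vanishing for $n \ge 1$) is special to the selfinjective case. Everything else is formal.
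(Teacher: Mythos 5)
Your decomposition of the corollary is reasonable, and the derivation of $(1)\Leftrightarrow(3)$ from support symmetry (Theorem~\ref{SymmetryExterior} together with Remark~\ref{rem.cut_doesnt_matter}(3)) is correct and matches the paper, which routes the same content through Corollary~\ref{cor.same_growth}. The problem is your argument for $(1)\Rightarrow(2)$ (and, symmetrically, $(3)\Rightarrow(4)$): the syzygy-shift reduction is circular and does not establish the implication.

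Concretely: starting from $\Ext_\Lambda^n(M,N)=0$ for $n\ge n_0$, replacing $M$ by $\Omega_\Lambda^{n_0-1}M$ does give $\Ext_\Lambda^n(\Omega_\Lambda^{n_0-1}M,N)=0$ for all $n\ge 1$, via the dimension-shift isomorphism $\Ext_\Lambda^n(\Omega_\Lambda^{j}M,N)\cong\Ext_\Lambda^{n+j}(M,N)$ valid for $n\ge 1$. But that same isomorphism shows this statement is \emph{exactly} the hypothesis $\Ext_\Lambda^m(M,N)=0$ for $m\ge n_0$, repackaged; it contains no information about $\Ext_\Lambda^m(M,N)$ for $1\le m<n_0$. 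To recover those degrees from the high syzygy you would need $\sthom_\Lambda(\Omega_\Lambda^{n_0-1}M,\s^nN)$ for $n\le 0$, and over a selfinjective algebra these are Tate cohomology groups --- the identification $\Ext_\Lambda^n(X,Y)\cong\sthom_\Lambda(X,\s^nY)$ holds only for $n\ge 1$, so invertibility of $\Omega_\Lambda$ on $\stmod\Lambda$ lets you run the shift ``backwards'' only into Tate cohomology, not into low-degree $\Ext$. Your closing assertion, that the two vanishing conditions ``coincide for the original $M,N$ because they coincide for some high-syzygy pair,'' is precisely the non-sequitur: both conditions for the high-syzygy pair are tautologically equivalent to the hypothesis on $(M,N)$, and say nothing about its low degrees. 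What is actually needed is a genuine no-gap theorem for exterior algebras (vanishing of $\Ext^n$ for $n\gg 0$ forces vanishing for all $n\ge 1$); this fails for general selfinjective algebras and is not formal. The paper invokes \cite[Theorem~3.6]{Bergh3} for exactly this step, a result proved using Koszul objects and the finite generation of cohomology over the Hochschild cohomology ring, and your proposal would have to supply that theorem (or an equivalent gap-type argument) to close the hole.
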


\begin{proof}
The equivalences $(1) \Leftrightarrow (2)$ and $(3) \Leftrightarrow (4)$ follow from \cite[Theorem 3.6]{Bergh3}. The equivalence $(1) \Leftrightarrow (3)$ is a special case of Corollary~\ref{cor.same_growth}.
\end{proof}

We finish this paper by giving the proof of Theorem~\ref{SymmetryExterior}.

\begin{proof}[Proof of Theorem~\ref{SymmetryExterior}]
Suppose first that both $M$ and $N$ are periodic, and that $\Ext_{\Lambda}^n(M, N) = 0$ for $n \gg 0$. It follows from the periodicity that $\sthom_{\Lambda}(M, \Omega_{\Lambda}^n N) = 0$ for all $n \in \mathbb{Z}$. The Auslander-Reiten formula (cf.\ \cite{AuslanderReiten}) then implies that
\begin{eqnarray*}
0 & = & D \sthom_{\Lambda}( M, \Omega_{\Lambda}^n(N)) \\
& \simeq & \sthom_{\Lambda}( \Omega_{\Lambda}^{1+n}(N), \Omega_{\Lambda}^2( {_{\nu}M})) \\
& \simeq & \sthom_{\Lambda}( \Omega_{\Lambda}^{n-1}(N), {_{\nu}M})
\end{eqnarray*}
for all $n \in \mathbb{Z}$, where ${_{\nu}M}$ is $M$ twisted by the Nakayama automorphism $\nu$ of $\Lambda$. Let $c$ be the number of generators of our exterior algebra $\Lambda$, i.e.\ $\Lambda$ is the exterior algebra on the $c$ generators $X_1, \dots, X_c$. By \cite[Lemma 3.1]{Bergh2}, if $c$ is odd then $\nu$ is the identity, whereas if $c$ is even then $\nu$ maps each $X_i$ to $-X_i$. In the odd case, the twisted module ${_{\nu}M}$ therefore coincides with the twisted module $\tau M$ defined in \cite[Section 2]{Eisenbud}, and then, since $M$ is periodic, there is an isomorphism ${_{\nu}M} \simeq \Omega_{\Lambda}(M)$ by \cite[Theorem 2.2]{Eisenbud}. Consequently, we see that
$$0 = \sthom_A( \Omega_A^n(N),M) \quad \forall n \in \mathbb{Z},$$
regardless of the parity of $c$.

Summing up, we have shown that for periodic $M$ and $N$, the implication
\[ \Ext_{\Lambda}^n(M, N) = 0 \; \forall n \gg 0 \quad \Longrightarrow \quad \Ext_{\Lambda}^n(N, M) = 0 \; \forall n \geq 1 \]
holds. The claim of the theorem now follows by the implication (1) $\Rightarrow$ (4) of Theorem~\ref{theorem.gorenstein}.
\end{proof}

\end{document}